\documentclass[a4paper,10pt]{article}
\usepackage[a4paper,left=3.0cm,right=3.0cm,top=3.5cm,bottom=3.0cm]{geometry}
\usepackage[footnotesize]{caption}
\usepackage{amsmath,amssymb,amsthm}
\usepackage{graphicx}
\usepackage[round,sort&compress]{natbib}
\bibpunct{[}{]}{,}{n}{,}{,}

\usepackage{setspace} 
%\doublespacing

\newtheorem{theorem}{Theorem} 
\newtheorem{lem}{Lemma}
\newtheorem{prop}{Proposition}
\newtheorem{cor}{Corollary}

%opening
\begin{document}

\title{Distributed delays stabilize negative feedback loops}
\author{Samuel Bernard\thanks{Universit\'e de Lyon; Universit\'e Lyon 1; INSA de Lyon, F-69621; Ecole Centrale de Lyon; CNRS, UMR5208, Institut Camille Jordan, 43 blvd du 11 novembre 1918, F-69622 Villeurbanne-Cedex, France, \tt{bernard@math.univ-lyon1.fr}}}
\maketitle

\begin{abstract}
Linear scalar differential equations with distributed delays appear in the study of the local stability of nonlinear differential equations with feedback, which are common in biology and physics. Negative feedback loops tend to promote oscillation around steady states, and their stability depends on the particular shape of the delay distribution. Since in applications the mean delay is often the only reliable information available about the distribution, it is desirable to find conditions for stability that are independent from the shape of the distribution. We show here that the linear equation with distributed delays is asymptotically stable if the associated differential equation with a discrete delay of the same mean is asymptotically stable. Therefore, distributed delays stabilize negative feedback loops.
\end{abstract}

\section{Introduction}
%Delays affect the local stability of the steady states of delayed functional systems $\dot x(t) = \mathcal{F}(x,x_t)$, where $\mathcal{F}:\text{R}^n \times C([-\infty,0],\text{R}^n) \to \text{R}^n$ and the function $x_t$ is the history of $x$, $x_t(\theta) = x(t+\theta)$, for $\theta \in (-\infty,0]$. 
The delayed feedback system of the form 
\begin{align}\label{eq:nl}
\dot x = F\Bigl(x, \int_0^{\infty} [d\eta(\tau)] \cdot g\bigl(x(t-\tau),\tau\bigr) \Bigr),
\end{align}
is a model paradigm in biology and physics \citep{monk2003, atay2003, adimy2006, rateitschak2007, eurich2005, meyer2008}. The first argument is the instantaneous part and the second one, the delayed or retarded part, which forms a feedback loop.  The function $\eta$ is a cumulative distribution of delays and $F$ and $g$ are nonlinear functions satisfying $F(0,0)=0$ and $g(0,\tau)=0$. When $F:\text{R}^d \times \text{R}^{d \times d} \to \text{R}^d$ and $g:\text{R}^d \times \text{R} \to \text{R}^{d \times d}$ are smooth functions, the stability of $x=0$ is given by the linearized form,
\begin{align}
\dot x = -A x - \int_0^{\infty} [B(\tau) \cdot d \eta(\tau)] x(t-\tau).
\end{align}
The coefficients $A$ and $B(\tau) \in \text{R}^{d \times d}$ are the Jacobian matrices of the instantaneous and the delayed parts, $\eta: [0,\infty) \to \text{R}^{d \times d}$ is the distribution of delays and $(\cdot)$ is the pointwise matrix multiplication. In biological applications, discrete delays in the feedback loop are often used to account for the finite time required to perform essential steps before $x(t)$ is affected. This includes maturation and growth times needed to reach reproductive age in a population \citep{hutchinson1948,mackey1978}, signal propagation along neuronal axons \citep{campbell2007}, and post-translational protein modifications \citep{monk2003,bernard2006b}. Introduction of a discrete delay can generate complex dynamics, from limit cycles to chaos \citep{sriram2008}. Linear stability properties of scalar delayed equations are fairly well characterized. However, lumping intermediate steps into a delayed term can produce broad and atypical delay distributions, and it is not clear how that affects the stability compared to a discrete delay \citep{campbell2009}. 

Here, we study the stability of the zero solution of a scalar ($d=1$) differential equation with distributed delays,
\begin{align}\label{eq:x}
 \dot x & = - a x - b \int_0^{\infty} x(t-\tau) d\eta(\tau).
\end{align}
The solution $x(t) \in \text{R}$ is the deviation from the zero steady state of equation (\ref{eq:nl}). Coefficients $a=-\text{D}_1F(0,0) \in \text{R}$ and $b = -\text{D}_2F(0,0) \neq 0$, and the integral is taken in the Riemann-Stieltjes sense. We assume that $\eta$ is a cumulative probability distribution function, i.e. $\eta: \text{R} \to [0, 1]$ is nondecreasing, piecewise continuous to the left, $\eta(\tau)=0$ for $\tau<0$ and $\eta(+\infty)=1$. Additionally, we assume that there exists $\nu>0$ such that 
\begin{align}\label{eq:nu}
\int_0^{\infty} e^{\nu \tau} d\eta(\tau) < \infty.
\end{align}
This last condition implies that the mean delay value is finite,
\begin{align*}
E & = \int_0^{\infty} \tau d\eta(\tau) < \infty.
\end{align*}
The corresponding probability density function is $f(\tau)$ given by $d \eta(\tau) = f(\tau) d\tau$, where the derivative is taken in the generalized sense. The distribution can be continuous, discrete, or a mixture of continuous and discrete elements. When it is a single discrete delay (a Dirac mass), the asymptotic stability of the zero solution of equation (\ref{eq:x}) is fully determined by the following theorem, due to Hayes \citep{hayes1950},
\begin{theorem}\label{th:hayes}
Let $f(\tau) = \delta(\tau-E)$ a Dirac mass at $E$. The trivial solution of equation (\ref{eq:x}) is asymptotically stable if and only if $a > |b|$, or if $b>|a|$ and
\begin{equation}\label{eq:Emax}
E <  \frac{\arccos(-a/b)}{\sqrt{b^2-a^2}}.
\end{equation}
\end{theorem}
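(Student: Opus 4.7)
My plan is to reduce the question of asymptotic stability to the location of the roots of the characteristic function
\begin{align*}
\Delta(\lambda) = \lambda + a + b e^{-\lambda E},
\end{align*}
and then trace these roots as $E$ varies continuously from $0$. By the standard theory of linear retarded functional differential equations, the trivial solution of (\ref{eq:x}) with $f(\tau) = \delta(\tau-E)$ is asymptotically stable if and only if every root of $\Delta(\lambda)=0$ lies in the open left half-plane; moreover $\Delta$ has only finitely many roots in any half-plane $\{\text{Re}\,\lambda \ge -c\}$, and these depend continuously on $E$, so the count of right half-plane roots can change only when a root crosses the imaginary axis. At $E=0$, $\Delta$ has the single root $\lambda = -(a+b)$, which lies in the open left half-plane iff $a+b > 0$.

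Next I would locate the possible imaginary crossings. Substituting $\lambda = i\omega$ with $\omega>0$ into $\Delta(\lambda)=0$ and separating real and imaginary parts gives
\begin{align*}
a + b\cos(\omega E) = 0, \qquad \omega = b\sin(\omega E),
\end{align*}
and hence $\omega^2 = b^2-a^2$. Crossings therefore require $|b|>|a|$. In the case $a>|b|$, no such crossing can occur at any $E\ge 0$; since $a+b>0$ the system is stable at $E=0$ and stays so for all $E\ge 0$, which settles the first disjunct.

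For the regime $b>|a|$, the smallest $E>0$ admitting a crossing is $E_{\max} = \arccos(-a/b)/\sqrt{b^2-a^2}$, reached with $\omega = \sqrt{b^2-a^2}$. To show that stability is actually lost at this value, I would differentiate $\Delta(\lambda(E),E)=0$ implicitly, use the identity $b e^{-i\omega E} = -(a+i\omega)$ valid at the crossing, and simplify to
\begin{align*}
\left.\frac{d\,\text{Re}\,\lambda}{dE}\right|_{\lambda = i\omega} = \frac{\omega^2}{(1+aE)^2 + (\omega E)^2} > 0.
\end{align*}
Every imaginary crossing is therefore transversal and oriented from the left half-plane to the right, so stability at $E=0$ (which again holds because $a+b>0$ in this regime) persists exactly on $[0,E_{\max})$ and is lost beyond.

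Necessity in the remaining cases follows from the same monotonicity: if $a+b\le 0$ at $E=0$ the system starts unstable, and no crossing can restore stability as $E$ grows. The step I expect to require the most care is the global control of the root structure, in particular ruling out roots drifting in from infinity or accumulating on the imaginary axis for intermediate values of $E$. This is handled by the a priori bound $|\lambda| \le |a| + |b|\,e^{-E\,\text{Re}\,\lambda}$, which confines the roots of any bounded real part to a compact set depending only on $E$, making the continuity-in-$E$ argument rigorous.
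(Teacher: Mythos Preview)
The paper does not actually prove this theorem: it is stated as the classical result of Hayes (1950) and simply cited, so there is no ``paper's own proof'' to compare against. Your sketch is the standard parameter-continuation argument (start at $E=0$, track purely imaginary crossings, check transversality) and is essentially correct.

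A few points you may want to tighten. First, your case analysis implicitly relies on the fact that \emph{every} imaginary crossing, not just the first one, satisfies $d\,\mathrm{Re}\,\lambda/dE>0$; you computed this at $\lambda=i\omega$ with $\omega=\sqrt{b^2-a^2}$, and since the formula you obtained is independent of which branch of $\arccos$ is used (and indeed of the sign of $b$), this is fine, but it is worth saying explicitly when you invoke it for the ``necessity'' direction. Second, you should also rule out a crossing at $\omega=0$: $\Delta(0)=a+b$, so $\lambda=0$ is a root precisely on the line $a+b=0$, consistent with your $E=0$ analysis. Third, the a priori bound you quote controls roots with $\mathrm{Re}\,\lambda\ge 0$ uniformly in $E$ on compact intervals, which is exactly what is needed to exclude roots entering the closed right half-plane from infinity as $E$ increases; the standard reference here is the general theory of RFDEs (e.g.\ Hale or St\'ep\'an), and your sketch of it is adequate for a proposal.
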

There is a Hopf point if the characteristic equation of equation (\ref{eq:x}) has a pair of imaginary roots and all other roots are negative. For a discrete delay, the Hopf point occurs when equality in (\ref{eq:Emax}) is satisfied. Moreover, for any distribution $\eta$, there is a zero root along the line $-a=b$. At $-a=b=1/E$, there is a double zero root. When $a>-1/E$, all other roots have negative real parts, but when $a<-1/E$, there is one positive real root. Thus, the stability depends on $\eta$ if and only if $b>|a|$. Moreover, only a Hopf point can occur when $b>|a|$. Therefore, a distribution of delays can only destabilize equation (\ref{eq:x}) through a Hopf point, and only when $b>|a|$. This is a common situation when the feedback acts negatively on the system ($\text{D}_2F(0,0)<0$) to cause oscillations.

Assuming $b>0$ and making the change of timescale $t \to bt$, we have $a \to a/b$, $b \to 1$ and $\eta(\tau) \to \eta(b\tau)$. Equation (\ref{eq:x}) can be rewritten as
\begin{align}\label{eq:xx}
 \dot x & = - a x - \int_0^{\infty} x(t-\tau) d\eta(\tau).
\end{align}
The aim of this paper is to study the effect of delay distributions on the stability of the trivial solution of equation (\ref{eq:xx}), therefore, we focus on the region $|a|<1$. To emphasize the relation between the stability and the delay distribution, we will say that $\eta$ (or $f$) is stable if the trivial solution of equation (\ref{eq:xx}) is stable, and that $\eta$ (or $f$) is unstable if the trivial solution is unstable. 

It has been conjectured that among distributions with a given mean $E$, the discrete delay is the least stable one \citep{bernard01,atay2008}. If this were true, according to Theorem \ref{th:hayes}, all distributions would be stable provided that
\begin{equation}\label{eq:sc}
 E < \frac{\arccos(-a)}{\sqrt{1-a^2}}. 
\end{equation}
This conjecture has been proved for $a=0$ using Lyapunov-Razumikhin functions \citep{krisztin1990}, and for distributions that are symmetric about their means [$f(E-\tau) = f(E+\tau)$] \citep{miyazaki1997,bernard01,atay2008,kiss2009}. It has been observed that in general, a greater relative variance provides a greater stability, a property linked to geometrical features of the delay distribution \citep{anderson1991}. There are, however, counter-examples to this principle, and there is no proof that for $a \neq 0$ the least stable distribution is the single discrete delay. It is possible to lump the non-delayed term into the delay distribution using the condition found in \citep{krisztin1990}, but the resulting stability condition, $E/(1+a)<\pi/2$, is not optimal. Here, we show that if inequality (\ref{eq:sc}) holds, all distributions are asymptotically stable. That is, distributed delays stabilize negative feedback loops. 

In section \ref{s:pre}, we set the stage for the main stability results. In section \ref{s:d}, we show the stability for distributions of discrete delays. In section \ref{s:g}, we present the generalization to any distributions and in section \ref{s:b}, we provide illustrative examples.

\section{Preliminary results}\label{s:pre}
Let $\eta$ be a distribution with mean $1$. We consider the family of distributions 
\begin{align}\label{eq:scale}
\eta_E(\tau) = \begin{cases} 
	\eta(\tau/E), & E>0, \\
	H(\tau), & E=0.
\end{cases}
\end{align}
where $H(\tau)$ is the step or heaviside function at 0. The distribution $\eta_E$ has a mean $E \geq 0$. The characteristic equation of equation (\ref{eq:xx}), obtained by making the ansatz $x(t) = \exp(-\lambda t)$, is
\begin{align}\label{eq:ce}
\lambda + a + \int_0^\infty{e^{-\lambda \tau} d\eta_E(\tau)} = 0.
\end{align} 
When condition (\ref{eq:nu}) is satisfied, the distribution $\eta_E$ is asymptotically stable if and only if all roots of the characteristic equation have a negative real part $\text{Re}(\lambda)<0$ \citep{stepan1989}. Condition (\ref{eq:nu}) guarantees that there is no sequence of roots with real parts converging to a non-negative value. The leading roots of the characterisitc equations are therefore well defined. When $E=0$, i.e. when there is no delay, there is only one root, $\lambda < 0$. When $E>0$, the characteristic equation has pure imaginary roots $\lambda = \pm i \omega$ only if $0<\omega<\omega_c = \sqrt{1-a^2}$. Thus, the search for the boundary of stability can be restricted to imaginary parts $\omega \in (0, \omega_c]$ \citep{bernard01}. 

We define 
\begin{align}
C(\omega) = & \int_{0}^{\infty} \cos(\omega \tau) d\eta_E(\tau), \\
S(\omega) = & \int_{0}^{\infty} \sin(\omega \tau) d\eta_E(\tau).
\end{align}
We use a geometric argument to bound the roots of the characteric equation of equation (\ref{eq:xx}) by the roots of the characteristic equation with a discrete delay. More precisely, if the leading roots associated to the discrete delay are a pair of imaginary roots, then all the roots associated to the distribution of delays have negative real parts. We first state a criterion for stability: if $S(\omega)<\omega$ whenever $C(\omega)+a=0$, then 	$f$ is stable. The larger the value of $S(\omega)$, the more ``unstable'' the distribution is. We then show that a distribution of $n$ discrete delays $f_n$ is more stable than an certain distribution with two delays $f^*$, i.e. $S_n(\omega)\leq S^*(\omega)$. We construct $f^*$ and determine that one of the delays of this ``most unstable'' distribution $f^*$ is $\tau^*_1=0$, making it easy to determine its stability using Theorem \ref{th:hayes}. We then generalize for any distribution of delays. 

The next proposition provides a necessary condition for instability. It is a direct consequence of theorem 2.19 in \citep{stepan1989}. We give a short proof for completeness.

\begin{prop}\label{pr:os} If the distribution $\eta_E$ is asymptotically unstable, then there exists $\omega_s \in [0,\omega_c]$ such that $C(\omega_s) + a = 0$ and $S(\omega_s) \geq \omega_s$.
\end{prop}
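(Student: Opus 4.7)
The plan is a homotopy argument in the delay-scaling parameter. Introduce the one-parameter family $\eta_{sE}$ for $s \in [0,1]$, interpolating between no delay ($s=0$) and the target distribution ($s=1$). At $s=0$ the characteristic equation reduces to $\lambda + a + 1 = 0$, whose unique root $\lambda = -(1+a)$ is negative because $|a|<1$, so $\eta_0$ is asymptotically stable. Since $\eta_E$ is assumed unstable, set
\[
  s^* = \inf\{\, s \in (0,1] : \eta_{sE} \text{ is unstable}\,\}.
\]
By the continuous dependence of the leading root on $s$, guaranteed by the exponential moment condition (\ref{eq:nu}), one has $s^* \in (0,1]$, and the characteristic equation at $s=s^*$ must have a root on the imaginary axis, say $\lambda = i\omega^*$ with $\omega^* \geq 0$.

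Separating real and imaginary parts of the characteristic equation at $s^*$ gives
\begin{align*}
\int_0^\infty \cos(\omega^* \tau)\, d\eta_{s^*E}(\tau) + a &= 0,\\
\int_0^\infty \sin(\omega^* \tau)\, d\eta_{s^*E}(\tau) &= \omega^*.
\end{align*}
The case $\omega^* = 0$ would force $a=-1$, contradicting $|a|<1$, so $\omega^* > 0$. The key step is a scaling identity: from $\eta_{sE}(\tau) = \eta_E(\tau/s)$, a change of variable shows
\[
 \int_0^\infty \cos(\omega\tau)\, d\eta_{sE}(\tau) = C(s\omega), \qquad \int_0^\infty \sin(\omega\tau)\, d\eta_{sE}(\tau) = S(s\omega),
\]
where $C,S$ are built from $\eta_E$. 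Setting $\omega_s = s^*\omega^*$, the two identities above become $C(\omega_s) + a = 0$ and $S(\omega_s) = \omega^* = \omega_s/s^* \geq \omega_s$ since $s^* \leq 1$, which is the desired inequality. The bound $\omega_s \leq \omega_c$ follows from $a^2 + (\omega^*)^2 = C(\omega_s)^2 + S(\omega_s)^2 \leq 1$, giving $\omega^* \leq \omega_c$ and a fortiori $\omega_s \leq \omega_c$.

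The main obstacle is the continuity-of-roots step at $s^*$: one must exclude the possibility that the leading root jumps discontinuously or appears from infinity as $s$ crosses $s^*$. This is exactly where condition (\ref{eq:nu}) is needed, since it confines all roots uniformly in $s$ to a vertical strip and yields their continuous dependence on the parameter. Theorem 2.19 of \citep{stepan1989} packages this directly; an in-line justification would go through Rouch\'e's theorem or a compactness argument on roots inside any fixed vertical strip.
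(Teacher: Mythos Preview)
Your proof is correct and follows essentially the same route as the paper's: both introduce the one-parameter scaling family $\eta_{sE}$ (the paper calls the parameter $\rho$), locate the first crossing of the imaginary axis by a continuity-of-roots argument, and then use the scaling identity to rewrite the real and imaginary parts in terms of $C$ and $S$ for $\eta_E$, setting $\omega_s = s^*\omega^*$ to obtain $S(\omega_s) = \omega_s/s^* \geq \omega_s$. Your write-up is in fact slightly more explicit than the paper's on three points: you verify stability at $s=0$ directly, you rule out $\omega^*=0$ via $|a|<1$, and you justify $\omega_s \leq \omega_c$ through the bound $C^2+S^2 \leq 1$ rather than citing it as known.
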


\begin{proof} Suppose that the distribution $\eta_E$ is asymptotically unstable. The roots of the characteristic equation depend continuously on the parameter $E$ and cannot appear in the right half complex plane. Thus there is a critical value $0<\rho<1$ at which $\eta_{\rho E}$ loses its stability, and this happens when the characteristic equation (\ref{eq:ce}) has a pair of imaginary roots $\lambda = \pm i\omega$ with $0 \leq \omega < \omega_c = \sqrt{1-a^2}$, i.e. through a Hopf point. Splitting the characteristic equation in real and imaginary parts, we have
\begin{align*}
\int_{0}^{\infty} \cos(\omega \tau) d\eta_{\rho E}(\tau) & + a = 0, \\ 
\int_{0}^{\infty} \sin(\omega \tau) d\eta_{\rho E}(\tau) & = \omega.
\end{align*} 
Rewriting in term of $\eta_E$, we obtain
\begin{align*}
\int_{0}^{\infty} \cos(\omega \rho \tau) d\eta_E(\tau) & + a = 0, \\
\int_{0}^{\infty} \sin(\omega \rho \tau) d\eta_E(\tau) & = \omega.
\end{align*} 
Finally, denoting $\omega_s = \rho \omega \leq \omega < \omega_c$, we have
\begin{align*}
\int_{0}^{\infty} \cos(\omega_s \tau) d\eta_E(\tau) & + a = 0, \\
\int_{0}^{\infty} \sin(\omega_s \tau) d\eta_E(\tau) & = \omega = \omega_s/\rho \geq \omega_s.
\end{align*} 
This completes the proof.
\end{proof}
%% end proof prop 1

Proposition \ref{pr:os} provides a sufficient condition for stability: 
\begin{cor}\label{th:cor}
The distribution $\eta_E$ is asymptotically stable if (i) $C(\omega)>-a$ for all $\omega \in [0,\omega_c]$ or if (ii) $C(\omega)=-a$, $\omega \in [0,\omega_c]$, implies that $S(\omega)<\omega$.
\end{cor}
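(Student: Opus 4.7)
The plan is to prove the corollary as the direct contrapositive of Proposition~\ref{pr:os}. That proposition asserts that instability forces the existence of some $\omega_s \in [0,\omega_c]$ satisfying simultaneously $C(\omega_s)+a=0$ and $S(\omega_s)\geq \omega_s$. Negating this statement, asymptotic stability is guaranteed as soon as, for every $\omega \in [0,\omega_c]$, at least one of the two conditions fails: either $C(\omega)+a \neq 0$ or $S(\omega) < \omega$.

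Next I would check that each of the corollary's hypotheses implies this negated condition. Under hypothesis (i), the strict inequality $C(\omega) > -a$ on $[0,\omega_c]$ immediately gives $C(\omega)+a > 0$ for all such $\omega$, so the first clause of the disjunction holds everywhere and no ``bad'' $\omega_s$ can exist. Under hypothesis (ii), one argues by cases on a fixed $\omega$: if $C(\omega)+a \neq 0$ the first clause already holds, while if $C(\omega)+a = 0$ the hypothesis forces $S(\omega) < \omega$, so the second clause holds. Either way, the conclusion of Proposition~\ref{pr:os} is falsified and $\eta_E$ must be asymptotically stable.

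I do not expect any genuine obstacle, since the corollary is merely a reformulation of Proposition~\ref{pr:os}. The only point worth a line of verification is the behaviour at the endpoints of $[0,\omega_c]$: at $\omega = 0$ one has $C(0) = \int_0^\infty d\eta_E = 1$, and since $|a|<1$ this yields $C(0)+a > 0$, so the left endpoint is automatically handled by clause (i) of the disjunction and never triggers the implication in (ii). Hence the corollary follows in one short paragraph once Proposition~\ref{pr:os} is in hand.
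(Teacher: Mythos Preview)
Your proposal is correct and is exactly the argument the paper intends: the corollary is stated immediately after Proposition~\ref{pr:os} with no separate proof, precisely because it is the contrapositive you describe. Your extra remark about the endpoint $\omega=0$ (where $C(0)=1>-a$ since $|a|<1$) is a nice sanity check but not strictly needed, as the contrapositive already covers it.
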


Proposition \ref{pr:os} suggests that the scaling $\eta_E=\eta(\tau/E)$ is appropriate for looking at the stability with respect to the mean delay. The mean delay scales linearly, and unstable distributions therefore lose their stability at a smaller values of the mean delay, under this scaling. The condition $S(\omega_s)<\omega_s$ is however not necesssary for stability, as one can find cases where $S(\omega_s)>\omega_s$ even though the distribution is stable. This happens when an unstable distribution switches back to stability as $E$ is further increased (see for instance \citep{boese1989} or \citep{beretta2002} and example \ref{s:s}).

\section{Stability of a distribution of discrete delays}\label{s:d}
We define a density of $n$ discrete delays $\tau_i \geq 0$, and $p_i > 0$, $i=1,...,n$, $n \geq 1$, as 
\begin{align}\label{eq:fn}
 f_n(\tau) & = \sum_{i=1}^{n} p_i \delta(\tau-\tau_i) \\
\intertext{where $\delta(t-\tau_i)$ is a Dirac mass at $\tau_i$, and}
 \sum_{i=1}^{n} p_i \tau_i & = E, \text{ and } \sum_{i=1}^{n} p_i = 1. \nonumber
\end{align} 
In this section, we show that $f_n$ is more stable than a single discrete delay. We do that by observing that among all $n$-delay distributions, $n\geq 2$, that satisfy $C_n(\omega_s)+a=0$ for a fixed value $\omega_s < \omega_c$, the distribution $f^*$ that maximizes $S_n(\omega_s)$,
\begin{equation}
\max_{f_n} \bigl\{ S_n(\omega_s) | C_n(\omega_s)+a=0 \bigr\} = S^*(\omega_s),
\end{equation}
has 2 delays. We show that $S^*(\omega_s)<\omega_s$, implying that all distributions are stable. The following lemma shows how to maximize $S(\omega_s)$ for distributions of two delays.

\begin{lem}\label{th:S} Let $f_2$ be a delay density with mean $E$. Assume in addition that there exists $\omega_s \in (0,\omega_c)$ such that $C(\omega_s) = -a < \cos(\omega_c E)$. Then there exists $\tau_2^*$, $p_1^*$ and $p_2^*$ such that
\begin{align}
\tau_1^* & = 0, \\
p_1^* + p_2^* & = 1, \\
p_1^* + p_2^* \cos(\omega_s \tau_2^*) & =  p_1 \cos(\omega_s \tau_1) + p_2 \cos(\omega_s \tau_2), \\
p_2^* \tau_2^* & = E.
\end{align}
Moreover, there is at most two solutions for $\tau_2^*$ with $\tau_2^*<\pi/\omega_s$. If $\tau_2^*$ is the smallest solution, we have that $\tau_2^* \leq \tau_2$ and 
\begin{align*}
S^*(\omega_s) \equiv \sum_{i=1}^2 p_i^* \sin(\omega_s \tau_i^*) & \geq S(\omega_s).
\end{align*}
\end{lem}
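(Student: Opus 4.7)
The plan is to reduce the four-equation system defining $(\tau_2^*,p_1^*,p_2^*)$ to a single scalar equation, analyze its roots, and then establish $S^*\geq S$ by a Jensen-type concavity argument. Assume without loss of generality that $\tau_1\leq\tau_2$. Imposing $\tau_1^*=0$, the normalization $p_1^*+p_2^*=1$ together with $p_2^*\tau_2^*=E$ forces $p_2^*=E/\tau_2^*$ and $p_1^*=1-E/\tau_2^*$, and substitution into the $C$-matching equation reduces the system to $\phi(\tau_2^*)=0$ where
\[
\phi(\tau) := E\bigl(1-\cos(\omega_s\tau)\bigr) - (1+a)\tau.
\]
The derivative $\phi'(\tau)=E\omega_s\sin(\omega_s\tau)-(1+a)$ is unimodal on $(0,\pi/\omega_s)$, so $\phi$ is piecewise monotone on at most three sub-intervals and has at most two positive zeros in $(0,\pi/\omega_s)$. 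Using $p_1+p_2=1$, $p_1\tau_1+p_2\tau_2=E$ and $p_1\cos(\omega_s\tau_1)+p_2\cos(\omega_s\tau_2)=-a$, a direct expansion yields the key identity $p_1\phi(\tau_1)+p_2\phi(\tau_2)=0$. Since $\phi(0)=0$ and $\phi'(0)=-(1+a)<0$, $\phi$ is strictly negative just to the right of $0$; the identity then forces $\phi(\tau_1)$ and $\phi(\tau_2)$ to have opposite signs (or both vanish), and the intermediate value theorem produces a root of $\phi$ in $(0,\tau_2]$, so the smallest positive root satisfies $\tau_2^*\leq\tau_2$. The hypothesis $-a<\cos(\omega_c E)$ enters via the ancillary computation $\phi(E)=-E[\cos(\omega_s E)+a]<0$ (using $\omega_s<\omega_c$ and monotonicity of cosine on $(0,\pi)$), which guarantees $\tau_2^*>E$ and hence positivity of $p_1^*$.

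For the inequality $S^*\geq S$, switch to the variable $v=\omega_s\tau$ and introduce
\[
\psi(v)=(1-\cos v)/v,\qquad \chi(v)=\sin v/v,
\]
together with the convex weights $q_i=p_iv_i/(E\omega_s)$, so that the constraints become $\sum q_i=1$ and $\sum q_i\psi(v_i)=\psi(v_2^*)$, while $S/(E\omega_s)=\sum q_i\chi(v_i)$ and $S^*/(E\omega_s)=\chi(v_2^*)$. The inequality is thus equivalent to the Jensen-type statement: whenever $\psi(v_2^*)$ equals a convex combination of $\psi(v_i)$, then $\chi(v_2^*)$ dominates the corresponding combination of $\chi(v_i)$. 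A short computation shows that the signed curvature of the planar curve $v\mapsto(\psi(v),\chi(v))$ is proportional to $(\sin v-v)/v^3<0$ for $v>0$, so the curve turns clockwise throughout. In particular, the first arc (from $v=0$ up to the unique peak $v^+$ of $\psi$ in $(0,\pi)$) is a strictly concave graph $\chi=\chi_{\mathrm{upper}}(\psi)$; combined with the decrease of $\chi$ on $(0,\pi)$ and the decay $|\chi(v)|,\psi(v)=O(1/v)$ as $v\to\infty$, this first arc is the entire upper boundary of the convex hull of the curve. Since $v_2^*$ is the smallest positive root, it lies on this first arc, and Jensen's inequality applied to the concave envelope yields $\chi(v_2^*)\geq\sum q_i\chi(v_i)$, which is exactly $S^*\geq S$.

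The hard part will be verifying the geometric claim that every point of the curve lies on or below the first-arc graph. The curvature computation delivers concavity of the first arc straightaway, but one still needs to rule out the possibility that points on the descending branch $v\in(v^+,\pi)$ or in the oscillatory tail $v>\pi$ sit above it at the same value of $\psi$. Each case is elementary (the former uses the monotone decrease of $\chi$ on $(0,\pi)$; the latter uses $|\chi(v)|\leq 1/v$ together with the uniform lower bound $\chi_{\mathrm{upper}}\geq\chi(v^+)$), but this bookkeeping is the one genuinely technical step of the proof.
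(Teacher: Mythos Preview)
Your proof is correct and follows a genuinely different route from the paper's. The paper works directly in the $(u,v)=(\omega_s\tau_1,\omega_s\tau_2)$-plane, expresses $p_1,p_2$ in terms of $(u,v)$, and compares the slopes of the level curves of $C(u,v)$ and $S(u,v)$; the crucial computation is that $dv_C/du>dv_S/du$ along the lower branch of $C=-a$, which reduces to the elementary inequality $2-2\cos z-z\sin z>0$ on $(0,\pi]$. You instead absorb the constraints into the weights $q_i=p_i\tau_i/E$ and observe that both the $C$-matching and the $S$-comparison become statements about the planar curve $v\mapsto\bigl(\psi(v),\chi(v)\bigr)=\bigl((1-\cos v)/v,\ \sin v/v\bigr)$: the constraint reads $\psi(v_2^*)=\sum q_i\psi(v_i)$ and the desired inequality reads $\chi(v_2^*)\ge\sum q_i\chi(v_i)$. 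Your curvature identity $\psi'\chi''-\psi''\chi'=(\sin v-v)/v^3<0$ is correct (a two-line computation confirms it), and together with the tail estimates it shows the first arc is the concave upper envelope, so Jensen finishes the job. Your existence argument via the scalar function $\phi$ and the identity $p_1\phi(\tau_1)+p_2\phi(\tau_2)=0$ is also clean and arguably more transparent than the paper's line-versus-cosine picture.

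Two remarks. First, a real dividend of your formulation is that the Jensen step works verbatim for any finite convex combination, so your argument immediately yields $S^*(\omega_s)\ge S_n(\omega_s)$ for an $n$-delay density; this would let you bypass the iterative pair-replacement the paper uses in the proof of Theorem~\ref{th:n} for $n>2$. Second, your parenthetical that $\phi(E)<0$ forces $\tau_2^*>E$ (hence $p_1^*>0$) relies on $\omega_s E$ lying on the increasing branch of $\psi$, i.e.\ $\omega_s E\le v^+\approx 2.331$; since $\omega_s E<\arccos(-a)$ can exceed $v^+$ when $a$ is close to $1$, this step is not airtight. It does not affect the lemma as stated (positivity of $p_1^*$ is not asserted), but you should not lean on it when invoking the lemma later.
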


\begin{proof}
To see that there is always a solution, let $c>0$ be the smallest value such that the inequality $\cos(\theta) \geq 1 - c \theta$ is verified for all $\theta$. [$c = 0.725...$ by solving $c=\sin(\theta)$, with $1-\theta \sin(\theta) = \cos(\theta)$.] We have that $1-c \omega E \leq C(\omega) \leq \cos(\omega E)$. Thus, the line $1-d \omega E$ that goes through $C(\omega_s)$ satisfies $d = (1-C(\omega_s))/(\omega_s E) \leq c$, and therefore crosses the curve $\cos(\omega_s E)$ at some points. The smallest solution $\tau_2^*$ is the one such that $1 - d \omega_s \tau_2^* = \cos(\omega_s \tau_2^*)$. This way,
\begin{align*}
p_1^* + p_2^* \cos(\omega_s \tau_2^*) & = 1 - (1-C(\omega_s)) p_2^* \tau_2^*/E, \\
 & = C(\omega_s). 
\end{align*}
These new delay values maximize $S(\omega_s)$ under the constraints that $C(\omega_s)+a=0$ and that the mean remains $E$. That is, we will prove that
\begin{align*}
S^*(\omega_s) \equiv \sum_{i=1}^2 p_i^* \sin(\omega_s \tau_i^*) & \geq \sum_{i=1}^2 p_i \sin(\omega_s \tau_i),
\end{align*}
for all admissible $p_i$, $\tau_i$. Two show that, we recast the problem in a slightly different way. Writing $u=\omega_s \tau_1$, $v=\omega_s \tau_2$ and $T = \omega_s E$, we can express parameters $p_i$ in terms of $(u,v)$:
\begin{equation*}
p_1 = \frac{v-T}{v-u} \quad\text{and}\quad p_2 = \frac{T-u}{v-u},
\end{equation*}
where $u<T<v$. We consider $C$ and $S$ as functions of $(u,v)$. 
\begin{align}
C(u,v) & = \frac{v-T}{v-u} \cos(u) + \frac{T-u}{v-u} \cos(v), \label{eq:cuv} \\
S(u,v) & = \frac{v-T}{v-u} \sin(u)  + \frac{T-u}{v-u} \sin(v). \label{eq:suv}
\end{align}
Equation (\ref{eq:suv}) is to be maximized for $(u,v)$ along the curve $h=\{u,v\}$ implicitly defined by the level curves $C(u,v)=-a$. There are either two solutions in $v$, including multiplicity, of the equation $C(0,v)=-a$ or none, so the curve can be parametrized in a way that $(u(\xi),v(\xi))$ satisfies $(u(0)=0,v(0)=v_{max})$ and $(u(1)=0,v(1)=v_{min})$, with $v_{min} \leq v_{max}$. We claim that $S$ is maximized for $\xi=1$, i.e. $u=0$ and $v=v_{max}$. This is true only if $S(u(\xi),v(\xi))$ is increasing with $\xi$. That is, the curve $h$ must cross the level curves of $S$ upward. It is clear that $S$ is a decreasing function of $v$, for $u$ fixed and an increasing function of $u$, for $v$ fixed. Thus the level curves $S(u,v)=k$ can be expressed as an increasing function $v_{S,k}(u)$ such that
\begin{equation*}
S(u,v_{S,k}(u))=k,
\end{equation*}
when $k$ is in the image of $S$. Likewise, equation (\ref{eq:cuv}) can be solved locally to yield $v_{C,a}(u)$ such that
\begin{equation*}
C(u,v_{C,a}(u))=-a,
\end{equation*}
whenever $-a$ is in the image of $C$. The function $v_{C,a}(u)$ could take two values on the domain of definition. Because $S$ is decreasing in $v$, we choose the lower solution branch for $v_{C,a}(u)$. If, along that lower branch, the slope of $v_{C,a}(u)$ is larger than that of $v_{S,k}(u)$, then as $v$ decreases along the curve $c$, $S$ increases. Therefore, to show that $(0,v_{C,a}(0)=v_{min})$ maximizes $S$, we need to show that
\begin{equation}\label{eq:vCvS}
\frac{d v_{C,a}(u)}{du} > \frac{d v_{S,k}(u)}{du} > 0.
\end{equation}
It is clear that $d v_{S,k}(u)/du>0$. The pointwise derivatives of the level curves at $(u,v)$ are
\begin{align*}
\frac{d v_{C}(u)}{du} & = \frac{v-T}{T-u} \frac{-\cos(u)+\cos(v)+(v-u)\sin(u)}{\cos(u)-\cos(v)-(v-u)\sin(v)}, \\
\frac{d v_{S}(u)}{du} & = \frac{v-T}{T-u} \frac{-\sin(u)+\sin(v)-(v-u)\cos(u)}{\sin(u)-\sin(v)+(v-u)\cos(v)}.
\end{align*}
Because only the lower branch of $v_C$ is considered, we restrict $(u,v)$ where $dv_{C}(u) / du <+\infty$. This is done without loss of generality since $S$ is strictly larger on the lower branch than on the upper branch. Along the lower branch, $v_C(u)<\pi$. Inequality (\ref{eq:vCvS}) then holds if 
\begin{multline*}
(v-u) \bigl[ 2 - 2 \bigl(\cos(u)\cos(v)+\sin(u)\sin(v)\bigr) + (v-u) \bigl(\sin(u)\cos(v)-\cos(u)\sin(v) \bigr) \bigr] > 0.
\end{multline*}
Notice that this inequality does not depend on $T$, which cancels out, nor on $a$, since comparison is made pointwise, for any level curves. The inequality can be simplified and rewritten in terms of $z=v-u>0$,
\begin{equation*}
z \bigl[ 2 - 2 \cos(z) - z \sin(z)\bigl] > 0.
\end{equation*}
It can be verified that this inequality is satisfied for $z \in (0,\pi]$. Therefore, $S$ is maximized when $u=0$ and $v=v_{C,a}(0)=v_{min}$.
\end{proof}

\begin{figure}
\includegraphics[width=0.5\linewidth]{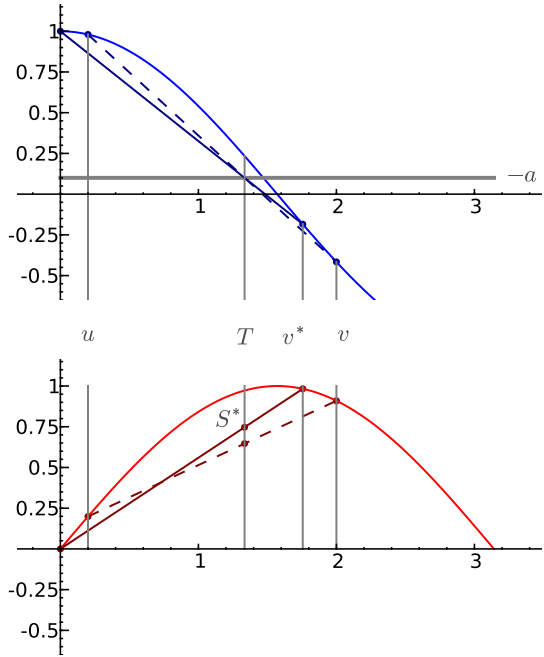}
\caption{How delays are replaced to get an maximal value of $S^*$. In this example, $a=-0.1$. Parameters are $u=0.2$, $v=2$, $p_1=0.37$, $p_2=0.63$ and $T=1.33$. The parameters maximizing $S$ are $u^*=0$, $v^*=1.76$, $p_1^*=0.24$ and $p_2^*=0.76$.}\label{f:2d}
\end{figure}

\begin{theorem}\label{th:n} Let $f_n$ be a density with $n \geq 1$ discrete delays and mean $E$ satisfying inequality (\ref{eq:sc}). The density $f_n$ is asymptotically stable.
\end{theorem}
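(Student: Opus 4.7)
The plan is a proof by contradiction. Suppose $f_n$ is asymptotically unstable with mean $E$ satisfying~(\ref{eq:sc}). By Proposition~\ref{pr:os} there is $\omega_s \in (0,\omega_c]$ with $C_n(\omega_s) = -a$ and $S_n(\omega_s) \geq \omega_s$. I would first reduce $f_n$ to a two-atom density $f^* = p_1^*\,\delta(\tau) + p_2^*\,\delta(\tau - \tau_2^*)$ of mean $E$ satisfying $C^*(\omega_s) = -a$ and $S^*(\omega_s) \geq S_n(\omega_s) \geq \omega_s$, then exploit the explicit form of $f^*$ to derive the contradiction.

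Step~1 is an iterated application of Lemma~\ref{th:S}. Given any two strictly positive atoms $(p_i,\tau_i),(p_j,\tau_j)$ of the current density, I normalize them to a 2-atom probability density with local mean $(p_i\tau_i+p_j\tau_j)/(p_i+p_j)$, invoke the lemma, and rescale back. The replacement preserves the pair's joint weight, joint mean, and joint cosine contribution at $\omega_s$, and weakly increases its sine contribution. Hence the full distribution's mean and value of $C_n(\omega_s)$ are preserved, while $S_n(\omega_s)$ is non-decreasing. The newly created zero atom is absorbed into any pre-existing zero atom, reducing the number of positive atoms by one; after at most $n-1$ such steps the density attains the required two-atom form $f^*$.

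For step~2, set $\theta = \omega_s \tau_2^*$. The constraints $p_1^* + p_2^* = 1$ and $p_1^* + p_2^*\cos\theta = -a$ give $p_2^* = (1+a)/(1-\cos\theta)$, and combining with $p_2^*\tau_2^* = E$ yields the implicit relation $\omega_s E = (1+a)\theta/(1-\cos\theta)$. The probability constraint $p_2^* \leq 1$ forces $\cos\theta \leq -a$, i.e., $\theta \geq \arccos(-a)$. Substituting $p_2^*$ into the instability inequality $p_2^*\sin\theta = S^*(\omega_s) \geq \omega_s$ and eliminating $\omega_s$ via the implicit relation collapses to $E \geq \theta/\sin\theta$. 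Since $\theta \mapsto \theta/\sin\theta$ is strictly increasing on $(0,\pi)$ (its derivative has the sign of $\sin\theta - \theta\cos\theta$, which vanishes at $0$ and has positive derivative $\theta\sin\theta$ on $(0,\pi)$), this yields $E \geq \arccos(-a)/\sin(\arccos(-a)) = \arccos(-a)/\sqrt{1-a^2}$, contradicting~(\ref{eq:sc}).

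I expect the iterative reduction in step~1 to be the main obstacle. Lemma~\ref{th:S} is stated for 2-delay probability densities under an existence hypothesis tied to the universal constant $c \approx 0.725$ from $\cos\theta \geq 1 - c\theta$, so its direct application to internal pairs of $f_n$ requires verifying an analogous slope bound $d = (1-\tilde C)/(\omega_s \tilde E) \leq c$ on each normalized pair. A cleaner fallback is to recast step~1 as a variational problem---maximize $S_n(\omega_s)$ over all $n$-atom probability measures with fixed mean $E$ and fixed $C_n(\omega_s) = -a$---and argue via a Carath\'eodory / extreme-point argument, modelled on the geometric reasoning in the proof of Lemma~\ref{th:S}, that the maximizer is supported on at most two points, one of them at zero.
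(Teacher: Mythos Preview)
Your Step~2 is correct and actually tidier than the paper's argument, which reaches the same contradiction via $S(\omega_s)^2 = p^2 - (-a+p-1)^2$ together with the monotonicity of $x \mapsto \arccos(-x)/\sqrt{1-x^2}$. The gap is in Step~1, but not quite where you locate it. The slope bound $d = (1-\tilde C)/(\omega_s\tilde E) \leq c$ that you worry about is in fact automatic for \emph{any} two-atom probability density, since $\cos\theta \geq 1 - c\theta$ holds termwise; that constant is a non-issue. The real obstruction hidden in Lemma~\ref{th:S} is that its output is a valid probability density (that is, $p_2^{*} \leq 1$) only when the normalized pair satisfies $\tilde C \leq \cos(\omega_s\tilde E)$, which is exactly inequality~(\ref{eq:belowcos}). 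When a pair violates~(\ref{eq:belowcos}), the smallest root $v^{*}$ of $(1-\cos v)/v = d$ lies below $\omega_s\tilde E$, forcing $p_2^{*} = \omega_s\tilde E / v^{*} > 1$, and your iteration cannot proceed.

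The paper handles this with a two-stage reduction rather than a uniform one: first apply Lemma~\ref{th:S} only to pairs that \emph{do} satisfy~(\ref{eq:belowcos}), iterating until every remaining pair of positive atoms violates it; then collapse all the surviving positive atoms to a single atom at their weighted mean $\bar\tau_2$ (this drops $C$ below $-a$), and finally shrink $\bar\tau_2$ to restore $C=-a$, using that both $C(0,v)$ and $S(0,v)$ are decreasing in $v$ so that $S$ increases at each step. Your Carath\'eodory fallback could in principle replace this, but the extreme-point structure alone does not tell you that the maximizing two-point measure has an atom at $0$; you would still need a Lemma~\ref{th:S}-type monotonicity argument on the resulting pair, and hence still face the dichotomy of inequality~(\ref{eq:belowcos}).
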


\begin{proof} Single delay distributions ($n=1$) are asymptotically stable by Theorem \ref{th:hayes}. We first show the case $n=2$.

Consider a density $f_2$, with $\tau_1 < \tau_{2}$. Suppose $C(\omega_s)+a=0$ for a value of $\omega_s<\omega_c$ (if not, Corollary \ref{th:cor} states that $f_2$ is stable). Remark that $-a=C(\omega_s)<\cos(\omega_s E)$. Indeed, from inequality (\ref{eq:sc}) and $\omega_s \leq \omega_c = \sqrt{1-a^2}$, we have  $\cos(\omega_s E) \geq \cos(\omega_c E) > -a$. Replace the two delays by two new delays with new weights: $\tau_1^* = 0$ and $\tau_2^* \geq 0$ the smallest delay such that the following equations are satisfied:
\begin{align}
p_2^* \tau_2^* & = p_1 \tau_1 + p_2 \tau_2, \\
p_1^* + p_2^* \cos(\omega_s \tau_2^*) & =  p_1 \cos(\omega_s \tau_1) + p_2 \cos(\omega_s \tau_2), \\
p_1^* + p_2^* & =  p_1 + p_2 \quad (=1).
\end{align} 
Lemma \ref{th:S} ensures that there always exists a solution when $C(\omega_s) \leq \cos(\omega_s E)$. Additionally, $\tau_2^* \leq \tau_2$ and
\begin{align*}
S^*(\omega_s) \equiv \sum_{i=1}^2 p_i^* \sin(\omega_s \tau_i^*) & \geq \sum_{i=1}^2 p_i \sin(\omega_s \tau_i).
\end{align*}
That is, the new distribution $*$ maximizes the value of $S$. Therefore, if we are able to show that distributions with a zero and a nonzero delay satisfy $S(\omega_s)<\omega_s$, then by Corollary \ref{th:cor}, all distributions with two delays are stable. Consider $f(\tau) = (1-p) \delta(\tau) + p \delta(\tau-r)$. Suppose that there is $\omega_s \leq \omega_c$ such that
\[
C(\omega_s) = 1-p + p \cos(\omega_s r) = -a.
\]
We must show that $S(\omega_s) = p \sin(\omega_s) < \omega_s$. Summing up the squares of the cosine and the sine, we obtain
\[
p^2 = (-a+p-1)^2 + S^2(\omega_s),
\] 
so
\[
S(\omega_s) = \sqrt{p^2 - (-a+p-1)^2}.
\]
By assumption, the mean delay statisfies inequality (\ref{eq:sc}),
\[
pr < \frac{\arccos(-a)}{\sqrt{1-a^2}}.
\]
Thus,
\[
\omega_s = \frac{\arccos \bigl( - (a+1-p) p^{-1} \bigr)}{r} > p \sqrt{1-a^2}\frac{\arccos \bigl( - (a+1-p) p^{-1} \bigr)}{\arccos(-a)}.
\] 
Because $(a+1-p)/p \geq a$ for $p \in (0,1]$ and $a \in (-1,1)$, we have the following inequality
\[
\frac{\arccos(-a)}{\sqrt{1-a^2}} \leq \frac{\arccos \bigl( - (a+1-p) p^{-1} \bigr)}{\sqrt{1-\bigl( (a+1-p) p^{-1} \bigr)^2}}.
\]
Thus, 
\[
S(\omega_s) =  \sqrt{p^2 - (-a+p-1)^2} \leq p \sqrt{1-a^2} \frac{\arccos \bigl( - (a+1-p) p^{-1} \bigr)}{\arccos(-a)} < \omega_s.
\]
This completes the proof for the case $n=2$. 

For distributions $f_n$ with $n>2$ delays, the strategy is also to find a stable distribution that keeps $C(\omega_s)$ constant and increases $S(\omega_s)$, assuming that $C(\omega_s)+a=0$. This requires two steps. In the first one, all pairs of delays $\tau_i<\tau_j$ for which the inequality
\begin{align}\label{eq:belowcos}
\sum_{k \in \{i,j\}} p_k \cos(\omega_s \tau_k)\leq \cos\biggl(\omega_s \sum_{k \in \{i,j\}} p_k \tau_k\biggr),
\end{align}
holds are iteratively replaced by new delays $\tau_i^*=0$ and $\tau_j^*<\tau_j$, as done in Lemma \ref{th:S}. This transformation preserves $E$, $C(\omega_s)$ and increases $S(\omega_s)$. This is repeated until there remains $m<n$ delays with $\tau_i>0$, $i=2,...,m$ such that 
\[
\sum_{k \in \{i,j\}} p_k \cos(\omega_s \tau_k) > \cos\biggl(\omega_s \sum_{k \in \{i,j\}} p_k \tau_k\biggr),
\]  
for $i \neq j \in \{2,...,m\}$, and $\tau_1 = 0$. (The $\tau_i$ are not the same as in the original distribution, the $*$ have been dropped for ease of reading.) The positive delays $\tau_i>0$ satisfy 
\begin{align*}
\sum_{i=2}^m p_i \cos(\omega_s \tau_i) > \cos\Bigl( \omega_s \sum_{i=2}^m p_i \tau_i \Bigr).
\intertext{while, by assumption,}
\sum_{i=1}^m p_i \cos(\omega_s \tau_i) = -a < \cos(\omega_s E).
\end{align*}

The second step is to replace all delays $\tau_i$, $i=2,...,m$ with the single delay $\bar\tau_2 = \sum_{i=2}^m p_i \tau_i$. We now have a 2-delay distribution with $\bar \tau_1=0$ and $\bar \tau_2 > 0$, $\bar p_1 \bar \tau_1 + \bar p_2 \bar \tau_2=E$, $\bar C(\omega_s) \leq C(\omega_s)$ and $\bar S(\omega_s)\geq S(\omega_s)$. Replace $\bar \tau_2$ by the delay $\tau_2^*<\bar \tau_2$ so that $C^*(\omega_s) = -a$, while keeping $E$ constant. Existence of $\tau_2^*$ is shown using the notation from the proof of Lemma \ref{th:S}, and noting that $C(0,v)$ and $S(0,v)$ are both decreasing in $v$. This change of delay has the effect of increasing $S$: $ S^*(\omega_s) \geq \bar S(\omega_s)$. Therefore, we have found a pair of discrete delays $(0, \tau_2^*)$ such that $C^*(\omega_s) = C(\omega_s)$ and $\omega_s > S^*(\omega_s) \geq S(\omega_s)$. By Corollary \ref{th:cor}, $f_n$ is asymptotically stable.
\end{proof} %% end discrete delays

\begin{figure}
\includegraphics[width=0.8\linewidth]{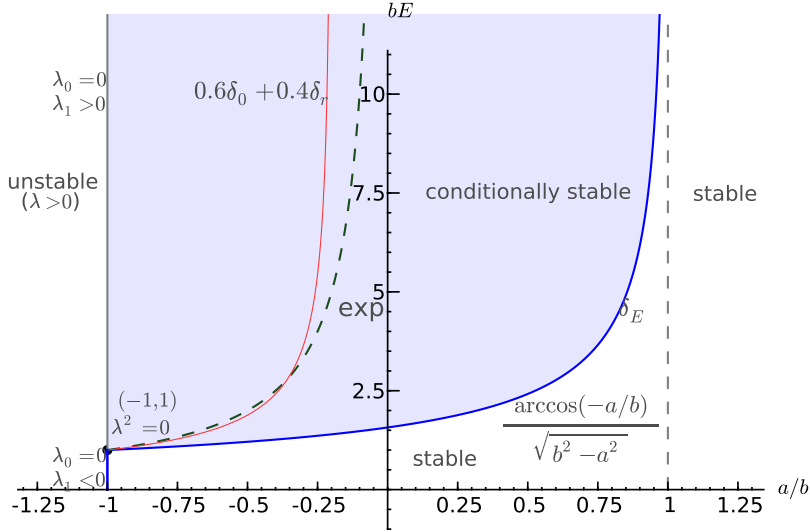}
\caption{Stability chart of distributions of delay in the $(a/b,bE)$ plane. The distribution-independent stability region is to the right of the blue curve. The distribution-dependent stability region is the shaded area. All stability curves leave from the point $(a=-b,E=1/b)$. The signs of the real roots of the characteristic equation $\lambda_0, \lambda_1$ along $a=-b$ are distribution-independent.}\label{f:chart}
\end{figure}

\section{Stability of a general distribution of delays}\label{s:g}
From the stability of distributions of discrete delays to the stability of general distributions of delays, there is a small step. First we need to bound the roots of the characteristic equation for general distributed delays.

\begin{lem}\label{th:mu}
Let $\eta_E$ be a delay distribution with mean $E$ satisfying inequality (\ref{eq:sc}). There exists a sequence $\{\eta_{n,E}\}_{n \geq 1}$ with distribution $\eta_{n,E}$ having $n$ delays, such that $\eta_{n,E}$ converges weakly to $\eta_E$. Then $\lambda$ is a root of the characteristic equation if and only if there exists a sequence of roots $\lambda_n$ for $\eta_{n,E}$ such that $\lim_{n \to \infty} \lambda_n = \lambda$. Let $\{\mu_n\}_{n \geq 1}$ be a sequence of real parts of roots of the characteristic equations. Additionally,
\begin{align*}
 \limsup_{n \to \infty} \mu_n = \mu < 0.
\end{align*}
\end{lem}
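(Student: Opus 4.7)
My plan is to address the three assertions of the lemma in order, with the main difficulty concentrated in the strict negativity of the limsup. For the approximating sequence I would use a standard Riemann--Stieltjes discretization: truncate at some $M_n \to \infty$, partition $[0, M_n]$ into $n$ sub-intervals, place Dirac masses at representative points of each sub-interval with weight equal to the corresponding $\eta_E$-measure, and make a small correction to restore the total mean to $E$ exactly. This yields weak convergence of $\eta_{n,E}$ to $\eta_E$ and, thanks to condition (\ref{eq:nu}), a uniform bound $\int_0^\infty e^{\nu'\tau}\, d\eta_{n,E}(\tau) \le K$ for some $0 < \nu' < \nu$.

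For the root correspondence I would work on the half-plane $\Omega = \{\operatorname{Re}\lambda > -\nu'\}$, where the characteristic functions
\[
\Delta_n(\lambda) = \lambda + a + \int_0^\infty e^{-\lambda\tau}\, d\eta_{n,E}(\tau), \qquad \Delta(\lambda) = \lambda + a + \int_0^\infty e^{-\lambda\tau}\, d\eta_E(\tau),
\]
are holomorphic. Weak convergence together with the uniform exponential moment bound give $\Delta_n \to \Delta$ uniformly on compact subsets of $\Omega$, and Hurwitz's theorem then yields the required two-sided correspondence: $\lambda \in \Omega$ is a zero of $\Delta$ if and only if it is the limit of a sequence of zeros of $\Delta_n$.

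The crucial third assertion uses that each $\eta_{n,E}$ is a discrete distribution with mean $E$ satisfying (\ref{eq:sc}), hence stable by Theorem~\ref{th:n}, so $\mu_n < 0$ for every $n$. Strictness of the limsup is the main obstacle. Suppose for contradiction that $\mu_{n_k} \to 0$ along a subsequence. The root bound $|\lambda_{n_k}| \le |a| + \int_0^\infty e^{-\mu_{n_k}\tau}\, d\eta_{n_k,E}(\tau)$ combined with the uniform moment control forces the imaginary parts to be bounded, so a further subsequence satisfies $\lambda_{n_k} \to i\omega^*$. By the correspondence above, $i\omega^*$ is a root of $\Delta$, which gives $C(\omega^*) + a = 0$ and $S(\omega^*) = \omega^*$.

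To derive a contradiction I would exploit the strictness of (\ref{eq:sc}): since $E < \arccos(-a)/\sqrt{1-a^2}$, the discrete approximations satisfy the hypotheses of Theorem~\ref{th:n} with a quantitative margin. Revisiting its proof, the inequality $S_n(\omega) < \omega$ at points where $C_n(\omega) + a = 0$ should admit a uniform-in-$n$ margin because the bound produced by Lemma~\ref{th:S} depends only on $a$, $E$, and $\omega$ and not on the number of atoms. Passing to the weak limit would then yield $S(\omega^*) < \omega^*$, contradicting $S(\omega^*) = \omega^*$ and forcing $\limsup\mu_n < 0$. Ensuring that this quantitative margin transfers cleanly to the weak limit -- possibly by first scaling the mean slightly above $E$, applying Theorem~\ref{th:n} there, and then using continuity of the leading root as the scaling is relaxed back to $1$ -- is the key technical point on which the whole argument rests.
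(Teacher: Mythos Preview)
Your treatment of the first two assertions---the discretization and the root correspondence via Hurwitz's theorem---is cleaner and more rigorous than the paper's, which argues directly from weak convergence that the characteristic functions evaluated at the relevant points tend to zero.

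For the strict negativity of $\limsup \mu_n$, however, the paper takes a different and shorter route than your primary proposal. Rather than extracting a uniform quantitative margin from the proof of Theorem~\ref{th:n}, the paper argues by perturbation in the two parameters $(a,E)$: if $\mu=0$ then $\eta_E$ sits at a Hopf point, so by continuity of the leading root there is a nearby pair $(\bar a,\rho)$, still satisfying inequality~(\ref{eq:sc}), at which the scaled distribution $\eta_{\bar a,\rho}$ is genuinely unstable; its discrete approximants $\eta_{n,\bar a,\rho}$ would then be unstable for large $n$, contradicting Theorem~\ref{th:n}. Your parenthetical alternative---scale the mean slightly and use continuity---is exactly in this spirit, but note that perturbing $E$ alone is not guaranteed to produce instability, since the stability boundary need not be monotone in $E$ (the paper's own gamma-distribution example in Section~\ref{s:b} illustrates this); perturbing in both $a$ and $E$, as the paper does, sidesteps that issue. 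Your primary approach of passing a uniform gap $S_n(\omega)\le \omega-\epsilon$ through the weak limit is plausible, since the maximizing two-delay distribution in Lemma~\ref{th:S} depends only on $(a,E,\omega_s)$, but turning the strict inequality in the proof of Theorem~\ref{th:n} into a bound uniform over $\omega_s\in(0,\omega_c]$ is extra work that the perturbation argument avoids entirely.
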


\begin{proof}
Consider $\lambda_n = \mu_n + i \omega_n$ a root the characterisitic equation for $\eta_{n,E}$. $E$ satisfies inequality (\ref{eq:sc}), so $\mu_n < 0$. So
\begin{align*}
& \Bigl\lvert \lambda_n + a + \int_0^{\infty} e^{-\lambda_n \tau} d \eta_{E}(\tau) \Bigl\lvert \\
&  = \Bigl\lvert \lambda_n + a + \int_0^{\infty} e^{-\lambda_n \tau} d [\eta_{E}(\tau)-\eta_{n,E}(\tau)] + \int_0^{\infty} e^{-\lambda_n \tau} d \eta_{n,E}(\tau) \Bigr\rvert \\
& = \Bigl\lvert \int_0^{\infty} e^{-\lambda_n \tau} d [\eta_{E}(\tau)-\eta_{n,E}(\tau)] \Bigl\lvert \to 0,
\end{align*}
as $n \to \infty$ by weak convergence. Thus any converging sub-sequence of roots converges to a root for $\eta_E$. The same way, if $\lambda$ is a root for $\eta_E$,  
\begin{align*}
& \Bigl\lvert \lambda + a + \int_0^{\infty} e^{-\lambda \tau} d \eta_{n,E}(\tau) \Bigl\lvert \\
&  = \Bigl\lvert \lambda + a + \int_0^{\infty} e^{-\lambda \tau} d [\eta_{n,E}(\tau)-\eta_{E}(\tau)] + \int_0^{\infty} e^{-\lambda \tau} d \eta_{E}(\tau) \Bigr\rvert \\
& = \Bigl\lvert \int_0^{\infty} e^{-\lambda \tau} d [\eta_{n,E}(\tau)-\eta_{E}(\tau)] \Bigl\lvert \to 0,
\end{align*}
as $n \to \infty$. Convergence is guarantedd by inequality (\ref{eq:nu}). Therefore, each root $\lambda$ lies close to a corresponding root $\lambda_n$.

Denote $\mu = \limsup_{n \to \infty} \mu_n$. Then $\mu$ is the real part of a root of the characteristic equation associated to $\eta_E$. $\mu_n<0$ for all $n$, so $\mu \leq 0$. Suppose $\mu=0$. Without loss of generality, we can assume that all other roots have negative real parts. Then $\eta_E$ is at a Hopf point, i.e. the leading roots of the charateristic equation are pure imaginary. Consider the distribution $\eta_{\bar a,\rho}(\tau) = \eta(\tau/\rho)$ and the associated real parts $\mu_{\bar a, \rho}$, where the subscript $a$ is there to emphasize the dependence of the stability on $a$. Then, by continuity, there exists $(\bar a,\rho)$ in the neighborhood $\varepsilon>0$ of $(a,E)$ for which $\eta_{\bar a, \rho}$ is unstable, i.e. $\mu_{\bar a, \rho}>0$. For sufficiently small $\varepsilon>0$, inequality (\ref{eq:sc}) is still satisfied:
\begin{equation*}
\rho < \frac{\arccos(- \bar a)}{\sqrt{1-\bar a^2}}.
\end{equation*}
Additionally, $\eta_{n,\rho}$ converges weakly to $\eta_{\bar a, \rho}$. However, because $\eta_{\bar a, \rho}$ is unstable, there exists $N>1$ such that $\eta_{n,\bar a, \rho}$ is unstable for all $n>N$, a contradiction to Theorem \ref{th:n}. Therefore $\mu<0$. 
\end{proof}

\begin{theorem}\label{th:main} Let $\eta_E$ be a delay distribution with mean $E$ satisfying inequality (\ref{eq:sc}). The distribution $\eta_E$ is asymptotically stable.
\end{theorem}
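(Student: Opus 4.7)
The plan is to reduce the general distribution case to the discrete-delay case already handled by Theorem~\ref{th:n}, using Lemma~\ref{th:mu} as the bridge. Given $\eta_E$ with mean $E$ satisfying (\ref{eq:sc}), I would first construct a sequence $\{\eta_{n,E}\}_{n \geq 1}$ of $n$-delay discrete distributions with mean $E$ that converges weakly to $\eta_E$. This is a standard approximation (for instance, partitioning the support of $\eta_E$ into $n$ intervals and placing a Dirac mass at the conditional mean of each, with appropriate weights); the mean constraint is preserved by construction. Because each $\eta_{n,E}$ is a discrete distribution with $n$ delays and the same mean $E$ satisfying (\ref{eq:sc}), Theorem~\ref{th:n} guarantees that every $\eta_{n,E}$ is asymptotically stable, so every root $\lambda_n = \mu_n + i \omega_n$ of its characteristic equation has $\mu_n < 0$.

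Second, I would invoke Lemma~\ref{th:mu} directly. The lemma states that roots of the characteristic equations for $\eta_{n,E}$ correspond (in the limit) to roots for $\eta_E$, and crucially that
\begin{equation*}
\limsup_{n \to \infty} \mu_n = \mu < 0.
\end{equation*}
Since any root $\lambda$ of the characteristic equation for $\eta_E$ is a limit of such roots $\lambda_n$, its real part is bounded above by $\mu<0$. Combined with condition (\ref{eq:nu}), which excludes the pathological situation of a sequence of roots of $\eta_E$ with real parts approaching $0$ from below (so the spectrum admits well-defined leading roots), this yields that all roots of the characteristic equation for $\eta_E$ have strictly negative real parts. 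By the standard characterization of asymptotic stability for linear distributed-delay equations cited earlier (see \citep{stepan1989}), $\eta_E$ is asymptotically stable, which is the statement of the theorem.

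The potential obstacle would be justifying that weak convergence of delay distributions implies convergence of roots in a uniform enough sense, and ensuring that no root of $\eta_E$ escapes to the right half-plane while the approximating roots stay in the left. However, this is precisely the content of Lemma~\ref{th:mu}, which uses the exponential integrability condition (\ref{eq:nu}) to control the tails. Once that lemma is in hand, the proof of Theorem~\ref{th:main} is a short combination of Theorem~\ref{th:n} applied to each approximant with the limsup statement of Lemma~\ref{th:mu}.
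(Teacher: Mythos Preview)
Your proposal is correct and follows essentially the same route as the paper: approximate $\eta_E$ weakly by $n$-delay distributions with the same mean, invoke Theorem~\ref{th:n} for each approximant, and then apply Lemma~\ref{th:mu} to push the strict negativity of real parts to the limit. The paper's own proof is just a two-line appeal to Lemma~\ref{th:mu}, since that lemma already packages both the existence of the approximating sequence and the $\limsup$ conclusion; your write-up simply unpacks those ingredients more explicitly.
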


\begin{proof}
Consider the sequence of distributions with $n$ delays $\{\eta_{n,E}\}_{n \geq 1}$ where $\eta_{n,E}$ converges weakly to $\eta_E$. By Lemma \ref{th:mu}, the leading roots of the characteristic equation of $\eta_E$ have negative real parts. Therefore $\eta_E$ is asymptotically stable.
\end{proof}

Is there a result similar to Theorem \ref{th:main} for the most stable distribution? That is, is there a mean delay value such that all distributions having a larger mean are unstable? When $a \geq 0$, the answer is no. For instance the exponential distribution with parameter is asymptotically stable for all mean delays, a property called unconditional stability. Other distributions are also unconditionally stable for $a\geq 0$. Anderson has shown that all distributions with smooth enough convex density functions are unconditionally stable \citep{anderson1991}, but densities do not need to be convex to be unconditionally stable. For example, the non-convex density $f(\tau)=0.5 [\delta(\tau)+\delta(\tau-2E)]$ has mean $E$ but is unconditionally stable. However, no distribution is unconditionally stable for all values of $a \in [-1,0)$, although some are for $a\geq a^*$ with $a^*>-1$ (see example below). 

From the results obtained here, we have the most complete picture of the stability of equation (\ref{eq:x}) when the only information about the distribution of delays is the mean (figure \ref{f:chart}). 

\begin{cor}\label{th:stab}
The zero solution of equation (\ref{eq:x}) is asymptotically stable if $a>-b$ and $a \geq |b|$ or if $b>|a|$ and
\begin{equation*}
 E < \frac{\arccos(-a/b)}{\sqrt{b^2-a^2}}.
\end{equation*}
The zero solution of equation \ref{eq:x} may be asymptotically stable (depending on the particular distribution) if $b>|a|$ and
\begin{equation*}
 E \geq \frac{\arccos(-a/b)}{\sqrt{b^2-a^2}}.
\end{equation*}
The zero solution of equation (\ref{eq:x}) is unstable if $a \leq -b$. 
\end{cor}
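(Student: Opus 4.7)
My plan is to reduce each regime in the corollary to an already-proved result: the main Theorem \ref{th:main} for the normalized scalar equation, a direct estimate on the characteristic equation in the over-damped regime, and an intermediate-value argument for the instability claim. Throughout I will work with the characteristic equation
\begin{equation*}
\lambda + a + b \int_0^\infty e^{-\lambda \tau} d\eta(\tau) = 0.
\end{equation*}

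For the conditional stability region $b>|a|$ with $E<\arccos(-a/b)/\sqrt{b^2-a^2}$, I would undo the change of timescale already used in the introduction. Setting $s=bt$ and $y(s)=x(s/b)$ converts equation (\ref{eq:x}) into the normalized form (\ref{eq:xx}) with new coefficient $\tilde a=a/b$ (so $|\tilde a|<1$), new distribution $\tilde\eta(\sigma)=\eta(\sigma/b)$, and new mean $\tilde E=bE$. The hypothesis on $E$ is exactly $\tilde E<\arccos(-\tilde a)/\sqrt{1-\tilde a^2}$, which is inequality (\ref{eq:sc}) for the rescaled problem, so Theorem \ref{th:main} delivers asymptotic stability. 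The same rescaling shows that the complementary region $b>|a|$, $E\geq \arccos(-a/b)/\sqrt{b^2-a^2}$ is exactly the boundary/beyond-Hopf region of the normalized equation; the discussion after Theorem \ref{th:hayes} already shows stability there depends on the shape of $\eta$, and Section \ref{s:b} supplies explicit examples on both sides.

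For the unconditional stability region $a\geq|b|$ with $a>-b$, I would argue directly on the characteristic equation. If $\mathrm{Re}(\lambda)\geq 0$, then $|\int_0^\infty e^{-\lambda\tau}d\eta(\tau)|\leq 1$, so any such root obeys $|\lambda+a|\leq|b|$. On the other hand $|\lambda+a|\geq a+\mathrm{Re}(\lambda)\geq a\geq|b|$. The two chains of inequalities can only both be tight if $\lambda$ is real, $\mathrm{Re}(\lambda)=0$, and $|\int_0^\infty e^{-\lambda\tau}d\eta(\tau)|=1$, which forces $\lambda=0$ and then $a+b=0$, contradicting $a>-b$. Condition (\ref{eq:nu}) rules out sequences of roots accumulating on the imaginary axis (as recalled in Section \ref{s:pre}), so no root has non-negative real part. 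Finally, for $a\leq -b$, the left-hand side of the characteristic equation, restricted to real $\lambda\geq 0$, equals $a+b\leq 0$ at $\lambda=0$ and tends to $+\infty$ as $\lambda\to+\infty$ (since $\int_0^\infty e^{-\lambda\tau}d\eta(\tau)$ stays in $[0,1]$ and the linear term dominates). By continuity, a non-negative real root exists, so the trivial solution is unstable.

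The only delicate step is the bookkeeping at the boundary $a=|b|$ of the over-damped region, where one must check separately the subcases $b>0$, $b<0$, and $b=0$ to verify that the simultaneous equalities $|\lambda+a|=|b|$ and $|\int e^{-\lambda\tau}d\eta|=1$ collapse to the excluded point $\lambda=0$, $a+b=0$. Everything else is a direct translation of Theorem \ref{th:main} or a standard continuity argument on the characteristic function, so no new analytic input is needed beyond what Section \ref{s:g} has already established.
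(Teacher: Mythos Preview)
Your proposal is correct. The paper does not supply a separate proof of this corollary; it is presented as a direct summary of Theorem~\ref{th:main} together with the distribution-independent facts about the characteristic equation already discussed in the introduction (the zero root along $a=-b$, the positive real root for $a<-b$, and that stability is distribution-dependent only when $b>|a|$). Your write-up makes those implicit steps explicit: the rescaling $t\to bt$ to invoke Theorem~\ref{th:main} in the region $b>|a|$ is exactly the one introduced before equation~(\ref{eq:xx}), your modulus estimate in the over-damped region $a\geq|b|$, $a>-b$ is the standard way to rule out roots with non-negative real part, and the intermediate-value argument on the real axis for $a\leq -b$ matches the paper's claim of a non-negative real root there. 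The only cosmetic point is that at the borderline $a=-b$ your argument yields $\lambda=0$ rather than a strictly positive root, so ``unstable'' should be read as ``not asymptotically stable''---which is also how the paper uses the term along that line.
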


\section{Boundary of stability}\label{s:b}
The exact boundary of the stability region in the $(a,E)$ plane can be calculated by parametrizing $\bigl(a(u),E(u)\bigr)$. Consider the distribution $\eta$. Then, at the boundary of stability,
\begin{align*}
0 & =  i \omega + a + \int_0^\infty e^{- i \omega \tau} d \eta(\tau/E), \\
& =  i \omega + a + \int_0^\infty e^{- i \omega E \tau} d \eta(\tau),
\intertext{setting $u = E \tau$,}
& =  i \frac{u}{E} + a + \int_0^\infty e^{- i u \tau} d \eta(\tau).
\end{align*} 
Separating the imaginary and the real part, we obtain
\begin{equation}\label{eq:par}
 a(u) = - C(u) \quad \text {and} \quad E(u) = \frac{u}{S(u)},
\end{equation}
for $u \geq 0$. The fact that $u$ depends on $E$ is not a problem: $u \to \infty$ if and only if $E \to \infty$, and $u \to 0$ if and only if $E \to 0$. Equations (\ref{eq:par}) allows systematic exploration of the boundary of stability in the $(a,E)$ plane. 

\subsection{Exponential distribution}
The exponential distribution $f(\tau) =  e^{- \tau}$ has normalized mean 1, and
\begin{align*}
C(u)  = \frac{1}{1+u^2} \quad \text{and} \quad S(u)  = \frac{u}{1+u^2}.
\end{align*}
The stability boundary is given by $E = -1/a$, for $-1 \leq a<0$. Therefore the exponential distribution is not unconditionally stable for $a<0$.

\subsection{Discrete delays}
The exponential distribution is also not the most stable distribution. The density with a zero and a positive delay is $f(\tau)=(1-p) \delta(\tau) + p \delta(\tau-r)$, $p \in (0,1]$. After lumping the zero delay into the undelayed part, the exact stabiltity boundary becomes
\begin{equation*}
E = p r = \frac{\arccos\big(-(a+1-p)p^{-1}\bigr)}{\sqrt{1-\big((a+1-p)p^{-1}\bigr)^2}}
\end{equation*}
This has an asymptote at $a=2p-1$, which can be located anywhere in $(-1,1]$. 

In general, for a distribution with $n$ delays, 
\begin{align*}
a(u)  = -\sum_{i=1}^n p_i \cos(u \tau_i) \quad \text{and} \quad E(u)  = \frac{u}{\sum_{i=1}^n p_i \sin(u \tau_i)}.
\end{align*}
The boundary of the stability region can be formed of many branches, as with a distribution with three delays in figure \ref{f:softkernel}. 

\subsection{Gamma distribution}\label{s:s}
As the mean $E$ is increased, distributions can revert to stability. This is the case with the second order gamma distribution (also called strong kernel) with normalized mean 1, 
\begin{equation}\label{eq:sk}
f(\tau) = 2^2 \tau e^{-2 \tau}.
\end{equation}
We have
\begin{align*}
C(u)  = \frac{1-u^2}{\bigl(1+u^2\bigr)^2}, \quad \text{and} \quad
S(u)  = \frac{2 u}{\bigl(1+u^2\bigr)^2},
\end{align*}
The boundary of stability is given by
\begin{align*}
\bigl(a(u),E(u)\bigr) & = \biggl(\frac{u-1}{(1+u)^2},(1+u)^2\biggr),
\end{align*}
There is a largest value $\hat a = 0.1216$. For large values of $E$, $a \to 0^+$. Therefore the boundary of the stability region is not monotonous; for $a \in (0,\hat a)$, $f$ first becomes unstable and then reverts to stability as the mean is increased (figure \ref{f:softkernel}).

\begin{figure}
\includegraphics[width=0.45\linewidth]{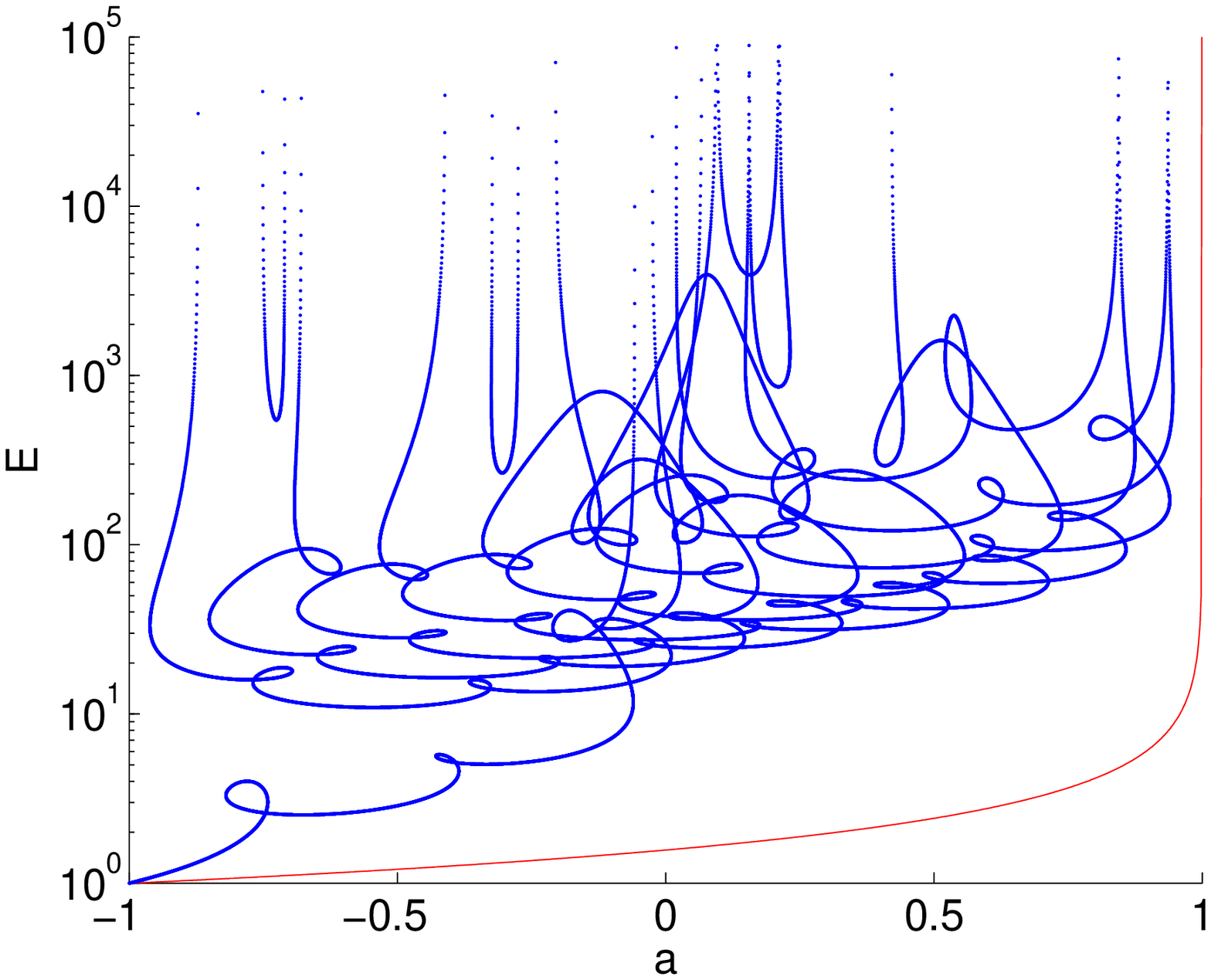}
\includegraphics[width=0.45\linewidth]{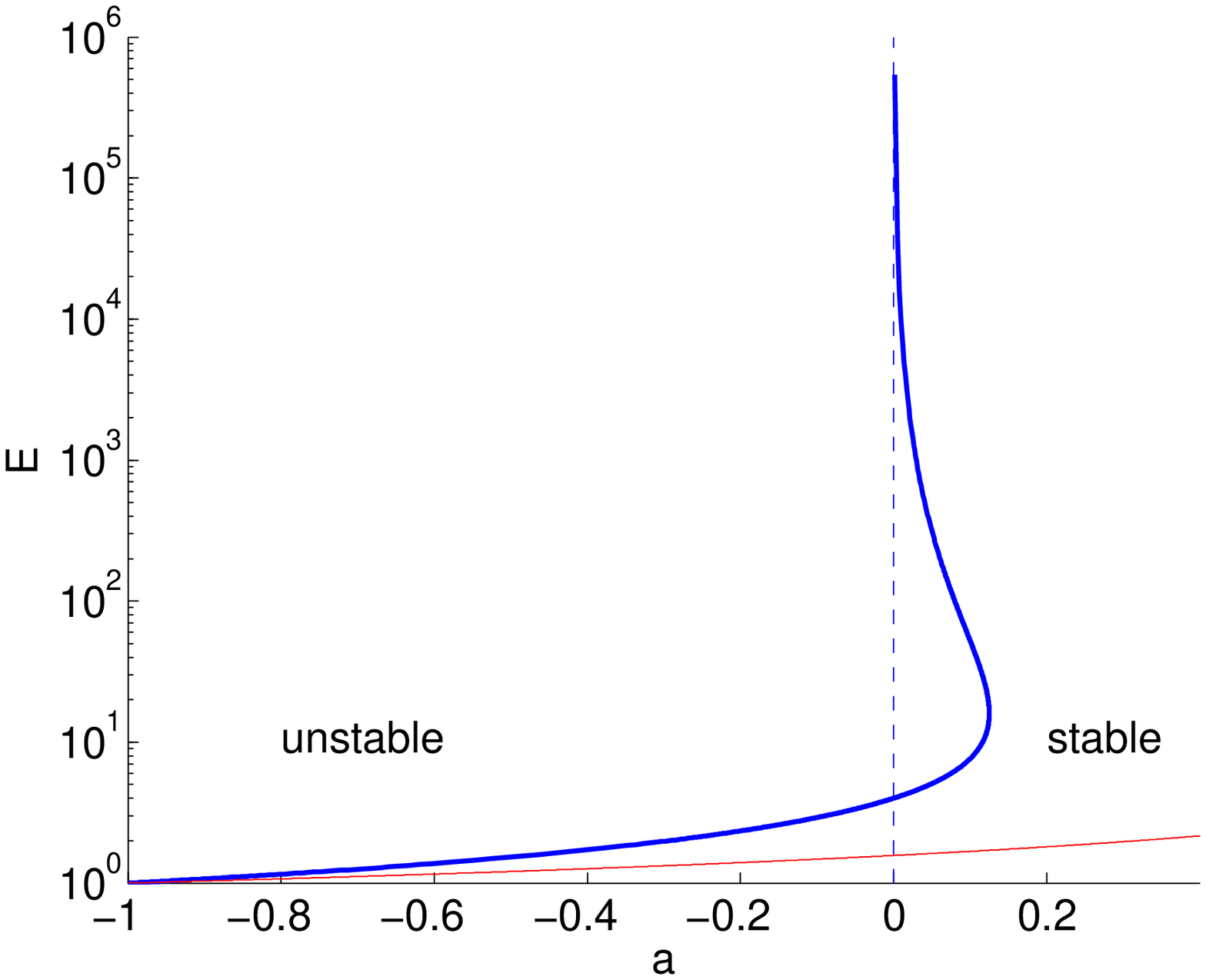}
\caption{(\emph{Left}) Stability chart of the three-delay distribution with $\tau_2 = 16 \tau_1$, $\tau_3 = 96 \tau_1$, $p_1 = 0.51$, $p_2 = 0.39$, $p_3 = 0.1$. (\emph{Right}) Stability chart of the second order gamma distribution, equation (\ref{eq:sk}).}\label{f:softkernel}
\end{figure}

%Theorem \ref{th:main} provides the best sufficient condition for the stability of distributed delays based only on the mean delay. This is a useful result not only in applications when the distribution of delays is unknown but also when it is well characterized.  Specific delay distributions have been studied in details, and for many of them, stability regions have been explicitely obtained. However, emprirical delay distributions only approximate these specific distributions. Thus, an important question is whether two similar distributions (in which sense??) will have similar stability regions. We have shown that this is the case in the $(a,E)$ plane. It can be shown in a similar way that similar distributions will have similar stabiliy region in the $(a,b)$ plane. Boese has shown that this is not true when the distribution 

\subsection*{Acknowledgements}
The author thanks Fabien Crauste for helpful discussion. 

\bibliographystyle{plain}
\bibliography{dde}

\end{document}